\theoremstyle{definition}
\newtheorem{theorem}{Theorem}
\newtheorem{lemma}[theorem]{Lemma}
\newtheorem{eg}[theorem]{Example}
\newtheorem{example}[theorem]{Example}
\font\tenbfmit=cmmib10
\font\sevenbfmit=cmmib7
\font\fivebfmit=cmmib5
\newcommand{\HH}{\mathcal{H}}
\newcommand{\peq}{\stackrel{\wedge}{=}}
\newcommand{\B}{{\mathcal B}}
\newcommand{\F}{\mathbb{F}}
\author{A. Blokhuis, G. Marino \& F. Mazzocca}
\title{Generalized Hyperfocused Arcs in $PG(2,p)$}
\date{}
\begin{document}
\maketitle
\begin{abstract}
A {\em generalized hyperfocused arc} $\HH $ in $PG(2,q)$
is an arc of size $k$ with the property
that the $k(k-1)/2$ secants can be blocked by a set  of  $k-1$
  points not belonging to the arc. We show that if $q$ is a prime and $\HH$
is a generalized hyperfocused arc of size $k$, then $k=1,2$ or $4$. Interestingly, this problem is also related to the (strong) cylinder conjecture \cite{B1}, as  we point out in the last section.
\end{abstract}
\section{Introduction and preliminaries}
Let $PG(2, q)$ be the projective plane over $F_q,$ the finite field with $q$ elements. A $k-$arc
 in $PG(2, q)$ is a set of $k$ points with no $3$ on a line. A line containing $1$ or $2$
points of a $k-$arc is said to be a tangent or secant to the $k-$arc, respectively.

A {\em blocking set} of a family of lines $ \cal F $ is a point-set
${\cal B} \subset PG(2,q)$ having non-empty intersection with each line in $ \cal F .$ If this is the case, we also say that the lines in $\cal F$ are \em blocked \rm by $\cal B .$

A {\em generalized hyperfocused arc} $\HH $ in $PG(2,q)$
is a  $k$-arc with the property
that the $k(k-1)/2$ secants can be blocked by a set ${\cal B}$ of $k-1$
  points not belonging to the arc.
Points of the arc $\HH$ will be called {\em white points} and points of the blocking
set ${\B}$ {\em black}. In case $k>1,$  since every secant to the arc contains a unique black point,
the $k-1$ black points induce a factorization, i.e. a partition into matchings, of the white $k$-arc and $k$ is forced to be even.
For $k=2,$ we only have a trivial example: $\cal B$ consists of a unique point out of $\HH$ on the line through the two points of $\HH .$

An non trivial example of generalized hyperfocused arc  is any $4$-arc of white points with
its three black diagonal points and our main result is that this is the only non trivial example, provided $q$ is an odd prime.

For $q$
even, there are many examples with all black points on a line; in this case $\cal H$ is simply called
a {\em hyperfocused arc}. As a consequence of the main result of \cite{BK},
hyperfocused arcs only exist if $q$ is even. Their  study  is   motivated by a relevant application to cryptography  in connection with constructions of efficient secret sharing schemes  \cite{H1,S1}.
When $q$ is even, a nice result is that  generalized hyperfocused arcs contained in a conic are hyperfocused  \cite{AKS}; moreover it is known that there exist examples of generalized
hyperfocused arcs which are not hyperfocused \cite{GM}. However, although much more is known about hyperfocused arcs,
there are still many open problems concerning them \cite{AKS,CH,GM}.

\bigskip

Let us conclude this section recalling some results on so-called dual $3-$nets that will play a crucial role in the proof of our main theorem. A {\em{dual $3$-net embedded in}} $PG(2,{\F}),$ the projective plane over a field $\F ,$ is a triple $\{A,B,C\}$ with $A,B,C$ pairwise disjoint point-sets of size $n$, called {\em{components}}, such that every line meeting two distinct components meets each component in precisely one point.

\begin{example} \label{ex1}
Let $\ell$ and $\Gamma $ be a line and a non singular conic in $PG(2,\F)$, respectively. It is well known that an abelian group G  on  $\Gamma\setminus \ell$ can be defined in the following way.
Choose a point $O\in \Gamma\setminus \ell$  as the identity of the group and, for any two points $P, Q\in \Gamma\setminus \ell$,  let $R'$ be the point
 that the line through $P, Q$ has in common with $\ell$. Then the sum of $P$
and $Q$ is defined by $P + Q = R$, where $R$ is the second of the two points (counted with multiplicity) common to the line $OR'$ and $\Gamma\setminus \ell$. Now, given a proper subgroup $A$ of $G$ of finite order $n$  and one of its cosets $B$ ($\not= A$),  the set $C$ of the points of $\ell$ on some line intersecting both $A$ and $B$ has exactly $m$ points. Then the triple $\{A,B,C\}$ is a dual $3-$net of order $n$ embedded in $PG(2,\F)$.
\end{example}

The following theorem follows from the main result of \cite{BKM} (the case that $A\cup B$ is contained
in the union of two lines is also characterized there).

\begin{theorem} \label{BKM}
Let $\{A, B, C\}$ be a dual $3$-net of order $n$ in $PG(2,{\F})$. Then,  if $C$ is contained in a line and ${\F}$ has positive characteristic $p\ge n$, $A\cup B$ is contained in a conic. If this conic is irreducible then
it is of type described in Example \ref{ex1}.
\label{BKM}\end{theorem}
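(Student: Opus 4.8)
The plan is to strip the statement down to a combinatorial-algebraic core and then rebuild the conic. Choose projective coordinates so that the line carrying $C$ is the line at infinity $\ell=\{z=0\}$; after discarding degenerate configurations one may assume $A$ and $B$ lie in the affine plane $PG(2,\F)\setminus\ell$, so that the $n$ points of $C$ become $n$ distinct directions $m_1,\dots,m_n$. The dual $3$-net axioms then read: for every $a\in A$ and every direction $m_i$ there is exactly one $b\in B$ with $b-a$ parallel to $m_i$, and symmetrically with $A$ and $B$ exchanged. Thus, for fixed $i$, collinearity in direction $m_i$ is a perfect matching $A\to B$, and the $n$ directions give $n$ matchings forming a net of order $n$; indexing rows by $A$, columns by $C$ and entries by $B$, this is a Latin square of side $n$. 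The hypothesis that $C$ lies on a line is used essentially here: each matching is now governed by the affine condition of having a prescribed slope.

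Next I would promote this Latin square to the Cayley table of an abelian group embeddable in $\F$. Writing the points of $A\cup B$ as affine vectors and each direction $m_i$ as a vector $v_i$, the matching attached to $m_i$ sends $a\mapsto a+\lambda v_i$ for some scalar $\lambda=\lambda(a,i)$. Chasing a closed ``train track'' $a_1\to b_1\to a_2\to b_2\to\cdots$ that alternates two directions turns these affine shifts into a Reidemeister-type (for commutativity, Thomsen-type) closure condition; establishing that such conditions hold forces the Latin square to be group-coordinatizable and the group to be abelian. The characteristic hypothesis $p\ge n$ enters precisely at this point, guaranteeing that the partial sums of shift vectors encountered along these chains stay pairwise distinct, so that none of the $n$ directions or matched points collapse modulo $p$. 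I expect this rigidity step to be the main obstacle: one must convert finitely many incidences into an exact algebraic identity valid over all of $\F$ while ruling out the sporadic non-group Latin squares that small characteristic would otherwise permit.

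Finally, once $A\cup B$ is known to carry a field-embeddable abelian group structure compatible with collinearity through $\ell$, I would reconstruct the conic directly. The abelian groups arising as $\Gamma\setminus\ell$ for an irreducible conic $\Gamma$ are exactly those of Example~\ref{ex1} --- the additive group when $\ell$ is tangent to $\Gamma$, and the cyclic group coming from the chord construction when $\ell$ is secant or external --- and in each case the realized group of order $n$ appears as a subgroup $A$ with a coset $B$ and $C$ the induced points on $\ell$, producing an irreducible $\Gamma$ through $A\cup B$ of the stated type. If instead the induced structure degenerates, the same analysis places $A\cup B$ on two lines, which is the reducible-conic alternative recorded as already treated in \cite{BKM}. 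As a shortcut, one may bypass building the group law by hand and instead invoke the general classification of $3$-nets embedded in $PG(2,\F)$, of which the present statement is the ``$C$ contained in a line'' case.
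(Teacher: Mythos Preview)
The paper does not prove this theorem at all: it is introduced with the sentence ``The following theorem follows from the main result of \cite{BKM}'' and is used thereafter as a black box. Your closing ``shortcut'' --- invoke the general classification of $3$-nets embedded in $PG(2,\F)$ and read off the case where one component lies on a line --- is precisely the paper's argument, in full.

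Your attempted direct route is a different matter, and it has a real gap at exactly the point you flag as the ``main obstacle''. Placing $C$ at infinity and reading off $n$ parallel-class matchings $A\to B$ is fine, and this does yield a Latin square. But you then assert that Reidemeister/Thomsen closure holds and forces the square to be an abelian group table, without supplying any reason why the closure conditions are satisfied. Knowing that each matching is ``collinearity in a fixed direction'' does not make the induced map $a\mapsto a+\lambda(a,i)v_i$ a translation: the scalar $\lambda$ genuinely depends on both $a$ and $i$, so composing two such maps around a rectangle does not cancel automatically, and your train-track chase produces no identity without further input. Your invocation of the hypothesis $p\ge n$ (``partial sums stay pairwise distinct'') is likewise heuristic; as stated it does not interact with any concrete step of the argument. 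In the source \cite{BKM} the proof proceeds by algebraic-geometric/polynomial methods (controlling the curve through $A\cup B$), not by a net-theoretic closure argument of the kind you sketch, and I do not see how to complete your approach without importing essentially that machinery.
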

\section{The problem and the main result}
In $PG(2,p)=PG(2,\F),$ $p$ a prime, let us fix a projective frame, so that every point has homogeneous coordinates $(x:y:z)$.
If $A=(a_1,a_2,a_3)$ is a non-zero vector of $\F^3$, we denote by $[A]=\langle(a_1,a_2,a_3)\rangle$
the point of $PG(2,p)$ with homogeneous coordinates $(a_1:a_2:a_3)$. Sometimes, abusing notation,
we will just write $A$ instead of $[A]$ and, in this case, we mean that for the point $[A]$ we are considering the coordinates  $(a_1,a_2,a_3)$ or some other special ones, that should be clear from the context. With this notation we write $A\peq B$ if $[A]=[B]$, i.e. $A=\lambda B$, for some non zero $\lambda$ in $\F$.

Throughout this section $\HH = \{ W_1,W_2,\dots ,W_{2n} \}$ will denote a
  generalized hyperfocused arc of order $2n$ in $PG(2,p),$  with black point-set ${\cal B}.$
$W$ will stand for a projective point, and $E$, with $W=[E]$ a corresponding suitably
chosen representing vector.
If $W_i,W_j$, with $W=[E]$ for some suitable representative $E$,
are two white points, we denote by $B_{ij}$ the unique black point on the line  $<W_i,W_j>$ and we define $b_{ij}$ by $$B_{ij}\peq E_i+b_{ij}E_j.$$ Obviously, since $B_{ji}=B_{ij}$ we have $b_{ji}=1/b_{ij}$.

From now on, \bf we will assume $p\not=2,3$ \rm since our results are trivial in these two cases.

\begin{lemma}
For $i\ne j\ne k\ne i$ we have
\begin{equation}
b_{ij}b_{jk}b_{ki}=1 \label{a1}
\end{equation}
\end{lemma}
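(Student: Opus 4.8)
The plan is to reduce the identity to a projective incidence and then to read that incidence off the combinatorial structure of the black points. First note that the product $b_{ij}b_{jk}b_{ki}$ does not depend on the chosen representatives: replacing $E_m$ by $\lambda_m E_m$ sends $b_{ij}$ to $b_{ij}\lambda_i/\lambda_j$, so the three $\lambda$'s cancel. Hence, since $W_i,W_j,W_k$ are three points of an arc and thus $E_i,E_j,E_k$ are independent, I may take them as the vertices of the coordinate frame, $E_i=(1,0,0)$, $E_j=(0,1,0)$, $E_k=(0,0,1)$. Then the three black points on the sides of the triangle $W_iW_jW_k$ are $B_{ij}\peq(1,b_{ij},0)$, $B_{jk}\peq(0,1,b_{jk})$, $B_{ki}\peq(b_{ki},0,1)$, and expanding the $3\times3$ determinant whose rows are the coefficient vectors of the cevians $\langle W_i,B_{jk}\rangle,\langle W_j,B_{ki}\rangle,\langle W_k,B_{ij}\rangle$ gives $1-b_{ij}b_{jk}b_{ki}$. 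Thus $b_{ij}b_{jk}b_{ki}=1$ is exactly the concurrency (Ceva) of these three cevians; the rival value $-1$ would instead mean $B_{ij},B_{jk},B_{ki}$ collinear (Menelaus), and the two cases merge only when $p=2$. Equivalently, via $b_{ki}=1/b_{ik}$, the claim is the multiplicativity $b_{ik}=b_{ij}b_{jk}$, the (multiplicative) cocycle condition that would let one rescale all representatives so as to make every $b_{ij}=1$.

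The substance must come from the global structure, because three freely placed points on the sides of a triangle satisfy no such relation. The lever I would use is that each black point $B$ is the centre of a full pencil of $n$ secants realizing a $1$-factor of the white $2n$-arc, so that projection from $B$ is an involutory pairing of the white points with collinear triples $\{W_a,\text{mate},B\}$. Writing the collinearity $B_{ij}\peq E_k+cE_m$ given by the mate $W_m$ of $W_k$ in the factor centred at $B_{ij}$, and comparing it with $B_{ij}\peq E_i+b_{ij}E_j$ in the frame above, pins $b_{ij}$ down to the frame-coordinates of $W_m$; repeating this at $B_{jk}$ and $B_{ki}$ produces three such expressions. The plan is then to introduce a fourth white point and use the incidences forced by the factorization to exhibit a common point of the three cevians, mirroring the basic $4$-arc, where the three diagonal cevians of the complete quadrangle meet at the fourth vertex.

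The hard part is precisely this last step: turning the purely combinatorial $1$-factor data into the exact projective concurrency, while controlling the degenerate coincidences among the black points and any special position of a cevian relative to the frame. I expect the blanket hypotheses $p\ne2,3$ to be exactly what discards these degeneracies --- $p=2$ collapses the Ceva and Menelaus cases above, and small characteristic is where the factorization can force spurious incidences among the $b$-values --- so that once the concurrency is secured on a fourth point it propagates to the identity $b_{ij}b_{jk}b_{ki}=1$ for every triple.
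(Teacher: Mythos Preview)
Your reduction is correct and pleasant: after normalizing the frame, $b_{ij}b_{jk}b_{ki}=1$ is precisely the Ceva concurrency of the cevians $\langle W_i,B_{jk}\rangle$, $\langle W_j,B_{ki}\rangle$, $\langle W_k,B_{ij}\rangle$, and the rival value $-1$ is the Menelaus collinearity of $B_{ij},B_{jk},B_{ki}$. But the proof stops exactly where the content is. Your plan is to exhibit a fourth white point at which the three cevians meet, guided by the $4$-arc picture; however the mates you produce --- the partner of $W_k$ in the $1$-factor centred at $B_{ij}$, the partner of $W_i$ at $B_{jk}$, the partner of $W_j$ at $B_{ki}$ --- are in general three \emph{different} white points, so no single fourth vertex witnesses the concurrency. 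Writing $B_{ij}\peq E_k+cE_m$ does pin $b_{ij}$ to the coordinates of $W_m$, but the three resulting expressions live in three unrelated frames and you have given no mechanism to combine them into the single relation $b_{ik}=b_{ij}b_{jk}$. The sentence ``once the concurrency is secured on a fourth point it propagates'' is exactly the step that is missing, and I do not see how the sketch closes it.

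The paper's argument is of a different nature and does not look for a concurrency point at all. It is Segre's lemma-of-tangents trick: with $W_i,W_j,W_k$ as frame, form the products $\omega_i=\prod w_j/w_k$ over the remaining $2n-3$ white points and $\beta_i=\prod b_j/b_k$ over the black points interior to the triangle; trivially $\omega_i\omega_j\omega_k=\beta_i\beta_j\beta_k=1$. The global hypothesis enters through the bijection ``each white $W$ has a unique black point on $\langle W,W_i\rangle$'', which matches the factors of $\omega_i$ with those of $\beta_i$ except for the one black point $B_{jk}$ on a side, giving $\omega_i=\beta_i/b_{jk}$. Multiplying the three such identities yields $b_{ij}b_{jk}b_{ki}=1$. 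The point is that the identity is forced by a count over \emph{all} white and black points simultaneously, not by any local concurrency; your approach would need a comparable global input, and as written it does not have one.
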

\begin{proof}
The proof makes essentially use of the computational technique of Segre's lemma of tangents.
For a (cyclicly ordered) white triple $W_i(=[E_i])$, $W_j$, $W_k$
define the white products: $\omega_i=\prod w_j/w_k$, (and $\omega_j$, $\omega_k$ cyclicly)
with one factor for every white point $W\peq w_iE_i+w_jE_j+w_kE_k$ different from
$W_i$, $W_j$ and $W_k$ (since the white points form an arc $w_i,w_j$ and $w_k\ne0$).
We obviously have $\omega_i\omega_j\omega_k=1$.
Define black products
$\beta_i=\prod b_j/b_k$, (and cyclicly) with one factor for every black point $B\peq b_iE_i+b_jE_j+b_kE_k$,
not on one of the sides of the triangle $W_iW_jW_k$, so this excludes the points
$B_{jk}$, $B_{ki}$ and $B_{ij}$. We again
have $\beta_i\beta_j\beta_k=1$. Now $\omega_i=\beta_i/b_{jk}$, because for every
white point $W\peq w_iE_i+w_jE_j+w_kE_k$ there is a (unique) black point
$B\peq b_iE_i+b_jE_j+b_kE_k$ (on the line $\langle W,W_i\rangle$) with $w_j/w_k=b_j/b_k$, and all these
black points contribute to the product defining $\beta_i$, except $B_{jk}\peq E_j+b_{jk}E_k$ because that
is on a side of the triangle. It follows that $\omega_i=\beta_i/b_{jk}$, and with it $b_{ij}b_{jk}b_{ki}=1$.
\end{proof}

Now, for $i,j\not= 1$ we have $B_{ij}\peq E_i+b_{ij}E_j$ and, applying (\ref{a1}) to the cyclicly  ordered triple $(E_1,E_i,E_j)$, we get $$B_{ij}\peq b_{1i}E_i+b_{1j}E_j,$$ for all $i,j=2,\dots ,2n$. Taking into account that
$$
B_{1j} \peq E_1+b_{1j}E_j \,\,\,\, \hbox{and} \,\,\,\, B_{i1} \peq E_i+ b_{i1}E_1 \peq E_1+b_{1i}E_i \, ,
$$
if we rescale our $E_s$ to $b_{1s}E_s$ (which simply means choosing a more suitable
representative for $W_s$), for each $s\not= 1$, we get
\begin{equation}
B_{ij}\peq E_i+E_j \, ,
\label{a2}\end{equation}
for all $i,j=1,2,\dots ,2n$.
\vskip .5cm
Let $\ell$ be a line intersecting $\B$ in exactly  $m<p$  points and set  $$S = \ell \cap \B = \{ B_1,B_2,\dots ,B_m  \}.$$ For a fixed white point $W\in \cal H $,  define the sets
$$
S_-=\{ W_1^-,W_2^-,\dots ,W_m^- \} \,\,\, \hbox{and} \,\,\, S_+= \{  W= W_1^+,W_2^+,\dots ,W_m^+  \} \,
$$
where $W_i^-$ is the unique white point other than $W$ on the line $<W,B_i>$ and $W_i^+$ the unique white point other than $W_1^-$ on the line $<W_1^-,B_i>$, $i=1,\dots ,m$. In this way, using appropriate white points,  $\cal H$ can be partitioned into pairs of blocks, each block of size $m$, as we will show below, so $m$ must be a divisor of $n$.

If $B=B_{ij}$ is the black point on the line $<W_i^+,W_j^->$, $i,j=1,2,\dots ,m$,
then, fixing coordinates
of the black points on $\ell$ so that $B_i=E_1^+ +E_i^-=E+E_i^-$:
$$
B = B_{ij} \peq E_i^++E_j^- \peq B_i- E_1^- + B_j - E_1^+ \peq \alpha B_i + \beta B_j - (E_1^- +
E_1^+) = \alpha B_i + \beta B_j + \gamma B_1,
$$
for some constants $\alpha , \beta , \gamma $. So $B$ is on the line $\ell$.  It follows that $S\cup S^-\cup S^+$ is a dual $3$-net of order $m$ and, by Theorem \ref{BKM}, $S^-\cup S^+$ is a set of $2m$ points on a conic $\Gamma $ and $m$ is the size of a subgroup of a group defined on $\cal H\setminus \ell$ ({\it cf.} Example \ref{ex1}). It follows that $m$ divides $p+1$ or $p-1$ according to the fact that $\Gamma$ has $0$  or $2$  points in common with $\ell$, respectively. According to this two cases, we say that $\cal H$ is of \it elliptic \rm or \it hyperbolic \rm type,  respectively. Note that the case $|{\cal H} \cap \ell|= 1$ cannot occur, otherwise $m$ should divide $p$.
\\[5pt] \indent Our last results can be summarized in the following lemma.
\begin{lemma} \label{ll1}
If a line $\ell$ intersects $\B$ in exactly $m<p$ \; points, then $m$ divides $n$. Moreover, $m$ divides $p-1$ or $p+1$ according to $\cal H$ is of hyperbolic or elliptic type w.r.t. $\ell$, respectively.
\end{lemma}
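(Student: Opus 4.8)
Both divisibilities will be read off the dual $3$-net $\{S,S_-,S_+\}$ built above, so the plan is to make that configuration rigorous and then argue the two assertions by separate routes. The key preliminary is the \emph{matching structure} of the black points, which is what makes the construction well defined: through the fixed white point $W$ the $2n-1$ secants pass through $2n-1$ \emph{distinct} black points (two secants sharing a black point would force it to equal $W$), so under the bijection between secants through $W$ and black points each $B_i\in S=\ell\cap\B$ lies on a unique secant $\langle W,B_i\rangle$ and determines a unique second white point $W_i^-$; iterating from $W_1^-$ produces the $W_i^+$. The sets $S_-$ and $S_+$ are then $m$-subsets of $\HH$, disjoint because $W\notin S_-$, so that $S$, $S_-$, $S_+$ really are three components of common size $m$.

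For $m\mid p\pm1$ I would carry out the computation indicated just before the statement: in the normalisation $B_{ij}\peq E_i+E_j$ of (\ref{a2}) the join of any $W_i^+\in S_+$ with any $W_j^-\in S_-$ meets $\ell$ in a point of $S$, so $\{S,S_-,S_+\}$ is a dual $3$-net of order $m$ with component $C=S$ on the line $\ell$. As $m<p=\operatorname{char}\F$, Theorem \ref{BKM} applies and places $S_-\cup S_+$ on a conic $\Gamma$. In the irreducible case $\Gamma$ carries the group $G$ of Example \ref{ex1} on $\Gamma\setminus\ell$, with $S_-$ and $S_+$ a subgroup and one of its cosets; hence $m$ divides $|G|$, and $|G|=p-1$ when $\ell$ is secant to $\Gamma$ (the hyperbolic case, two common points) while $|G|=p+1$ when $\ell$ is external (the elliptic case, no common point). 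This is exactly the claimed congruence. The degenerate alternatives---a tangent line, which yields only $m\mid p$ and hence the trivial $m=1$, and the reducible conic of Theorem \ref{BKM} (the two-line case)---I would dispose of separately, together with the remark excluding $|\HH\cap\ell|=1$.

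For $m\mid n$ I would use the involutions $\pi_i$ attached to the black points $B_i\in S$, where $\pi_i$ interchanges the two white points on each of its $n$ secants, so that $W_i^-=\pi_i(W)$ and $W_i^+=\pi_i\pi_1(W)$. The aim is to show that the orbit of $W$ under $\langle\pi_1,\dots,\pi_m\rangle$ is precisely $S_-\cup S_+$, of size $2m$, and that the same construction started from any still-unused white point returns a disjoint set of the same size; then $\HH$ splits into these ``pairs of blocks'' and $2m\mid 2n$, i.e.\ $m\mid n$.

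\textbf{Main obstacle.} I expect the partition step to be the delicate one. It amounts to proving that the blocks coming from different base points are pairwise equal or disjoint and jointly exhaust $\HH$---equivalently, that $\langle\pi_1,\dots,\pi_m\rangle$ acts semiregularly with every orbit of size $2m$, so that the subgroup-and-coset picture of Example \ref{ex1} propagates over all of $\HH$ and $\HH$ becomes a union of cosets of the order-$m$ subgroup, paired up by $\pi_1$. The conic-group reading makes this plausible, since the $\pi_i$ ought to restrict to the standard reflections of $\Gamma$; but guaranteeing that no orbit collapses and that the pairing is consistent is precisely what upgrades the easy $m\mid 2n$ to the asserted $m\mid n$, and is where the real work sits.
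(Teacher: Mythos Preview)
Your plan is essentially the paper's: build the dual $3$-net $\{S,S_-,S_+\}$ via the normalisation (\ref{a2}), invoke Theorem~\ref{BKM} to put $S_-\cup S_+$ on a conic, and read off both divisibilities. The $m\mid p\pm 1$ part is exactly what the paper does.

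Where you diverge is in treating $m\mid n$ as a separate, harder problem requiring the group $\langle\pi_1,\dots,\pi_m\rangle$ and an analysis of its orbits. In the paper this divisibility is \emph{not} a second argument but a by-product of the very computation you already cite for $m\mid p\pm1$. Once you know that every secant joining a point of $S_+$ to a point of $S_-$ has its black point on $\ell$ (this is the content of the displayed calculation $B_{ij}\peq E_i^++E_j^-\peq\alpha B_i+\beta B_j+\gamma B_1$), the triple $\{S,S_-,S_+\}$ is a dual $3$-net, and the block $S_-\cup S_+$ is then intrinsic: starting the construction from any $W'\in S_+$ instead of $W$, the lines $\langle W',B_i\rangle$ hit $S_-$ (by the $3$-net property) and nothing else, so the new $S_-$ equals the old one, and symmetrically for $S_+$. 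Hence different base points yield either identical or disjoint blocks, and $\HH$ is partitioned into pairs of $m$-blocks; $m\mid n$ follows. Your ``main obstacle'' dissolves the moment you reuse the $3$-net you already built, with no need for semiregularity of an involution group or for the conic-group picture at this stage.

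One small slip: ``$S_-$ and $S_+$ are disjoint because $W\notin S_-$'' only excludes $W_1^+$ from $S_-$, not the other $W_i^+$. The honest way to get disjointness is again the computation: if $W_i^+=W_j^-$ then $E_i^+=E_j^-$ in the normalised representatives, so $E_i^++E_j^-=2E_i^+$ would represent a white point lying on $\ell$; this is easily avoided (choose $W$ off $\ell$ and note that at most two white points can lie on $\ell$, so a generic choice works) and is implicitly what underlies the paper's assertion that $S\cup S_-\cup S_+$ really is a dual $3$-net.
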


\begin{lemma} \label{lm5}
If a line $\ell$ intersects $\B$ in exactly $m<p$ points, then $m\le 4$.
\end{lemma}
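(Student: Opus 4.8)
The plan is to push the group description of the pair $\{S^-,S^+\}$ obtained above until the subgroup $H$ playing the role of $S^-$ is forced to be small. By Theorem \ref{BKM} and Example \ref{ex1} I would fix coordinates so that $\ell$ is the line $z=0$, the conic $\Gamma$ is $xy=z^2$ (passing to $\F_{p^2}$ in the elliptic case, where the two points $\Gamma\cap\ell$ are conjugate), and $\Gamma\setminus\ell$ carries the multiplicative group with $P_t\peq(t,t^{-1},1)\leftrightarrow t$. Then $S^-=\{P_h:h\in H\}$ for a subgroup $H$ of order $m$ and $S^+=\{P_{gh}:h\in H\}$ for a coset $gH$. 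Using the normalization $B_{ij}\peq E_i+E_j$ from (\ref{a2}), I would take the representatives $E_h=-(h,h^{-1},1)$ on $S^-$ and $E_{gh}=(gh,(gh)^{-1},1)$ on $S^+$; this is exactly the choice that makes every cross secant $\langle P_h,P_{gh'}\rangle$ meet $\ell$ in its black point $(-ghh':1:0)$, recovering $S=\{(-c:1:0):c\in gH\}$.

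The next step is to read off the black points on the \emph{internal} secants. For $s\neq t$ in the same coset one gets $B_{st}\peq E_s+E_t\peq\bigl(s+t,\,(s+t)/(st),\,2\bigr)$, a point off $\ell$. Two features of this formula will drive the argument. First, the ``diameters'' $\langle P_h,P_{-h}\rangle$, present exactly when $-1\in H$ (i.e. $m$ even), all pass through the pole $N=(0:0:1)$ of $\ell$ with respect to $\Gamma$, so $N\in\B$. Second, the internal black points of a fixed ratio $d=t/s$ satisfy $xy=\tfrac{(1+d)^2}{4d}\,z^2$, hence lie on the conic $\Gamma_d$ of the pencil through the two base points $\Gamma\cap\ell$, where they form the coset $\tfrac{1+d}{2}H$; the internal black points of $S^-$ and of $S^+$ of ratio $d$ thus sit together on $\Gamma_d$ as a configuration of the type of Example \ref{ex1}. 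This web of conics, all sharing $\Gamma\cap\ell$, records the full rigidity of the internal black set.

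The crux is to convert this rigidity into a numerical bound. I would apply the product technique behind (\ref{a1}), namely Segre's lemma of tangents, directly to the white sub-arc $S^-$ on $\Gamma$: forming the white/black products over the $m$ points $\{P_h\}$ relative to a triangle of three of them and substituting the explicit internal black points $B_{st}$ computed above, the many coincidences collapse everything to a single product over the $m$-th roots of unity generating $H$. The resulting identity — in essence the assertion that a prescribed product over the subgroup $H$ equals a fixed root of unity — is where the bound lives: I expect it to be satisfiable only when $m\le 4$, with the extremal case $m=4$ corresponding to $H$ being the group of fourth roots of unity, i.e. the $4$-arc whose three diagonal points include the pole $N$. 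The same computation should run verbatim in the elliptic case once $H$ is read inside the norm-one subgroup of $\F_{p^2}^*$.

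The main obstacle, and where I would concentrate, is precisely this final reduction: making the product identity over $H$ completely explicit and checking that it fails for every $m\ge 5$ in both the hyperbolic and elliptic cases. Along the way one must handle the degenerate values $d=\pm1$ (the diameters through $N$, and the collapse at $1+d=0$), keep track of whether $g\in H$ so as not to double-count coincidences between the $S^-$- and $S^+$-parts, and ensure that the divisibility constraints of Lemma \ref{ll1} are used consistently with the characteristic hypothesis $m<p$ throughout.
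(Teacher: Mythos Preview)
Your proposal has a genuine gap: the entire argument is deferred to a ``product identity over $H$'' which you never write down, let alone analyze. Saying ``I expect it to be satisfiable only when $m\le 4$'' is a hope, not a proof; and there is no evident reason why a Segre-type product over the internal secants of $S^-$ alone should yield a clean constraint on $|H|$, since $S^-$ is not a closed substructure for the black/white incidence you would be summing over.

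What makes this especially unfortunate is that the contradiction is already visible in the formula you did compute. From $B_{st}\peq(s+t,(s+t)/(st),2)$, taking $t=s^{-1}$ inside the subgroup $H$ gives a black point $(s+s^{-1},s+s^{-1},2)$ on the line $x=y$, which also contains the white point $P_1=(1,1,1)\in S^-$. As soon as $m\ge 5$ there are two distinct inverse pairs $\{s,s^{-1}\}$ and $\{s^2,s^{-2}\}$ in $H\setminus\{1\}$ with $s+s^{-1}\ne s^2+s^{-2}$, hence two distinct black points on $x=y$ together with the white point $P_1$. That is impossible: the line through $P_1$ and one of these black points must be a secant (every line joining a black point to a white point is a secant, by the $1$-factor property), and a secant carries a \emph{unique} black point. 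This is exactly the paper's argument, and the elliptic case goes through verbatim after passing to $\F_{p^2}$ and the norm-one circle, with the same pair of black points lying on the real axis $x^p=x$ through the white point $1$. No product machinery is needed.
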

\begin{proof}
Suppose $m\ge 5$ and consider (homogeneous) projective coordinates $(x:y:z)$ in $PG(2,p)$ in  a way that the equation of $\ell$ is $z=0$. If $\cal H$ is of hyperbolic  type w.r.t. $\ell$, then we can assume that the conic $\Gamma$ has equation $xy=z$ and, by Theorem \ref{BKM}, we may take
$$
S_+ = \{  (u:\frac{1}{u}:1) \,\,\, : \,\,\, u\in \F^* \,\, \hbox{and} \,\, u^m=1 \}.
$$
We can also assume that $S_+$ contains the point $E_0=(1,1,1)$ and the four distinct points of type  $E_1=(v,1/v,1)$, $E_{-1}=(1/v,v,1)$, $E_2=(v^2,1/{v^2},1)$, $E_{-2}=(1/{v^2},v^2,1)$, where $v$ is a primitive $m-$root of unity. Then the two different black points $B_1\peq E_{-1}+ E_1 = ((v+1)/v,(v+1)/v,2)$, $B_2\peq E_{-2}+ E_2 = ((v^2+1)/{v^2},(v^2+1)/{v^2},2)$ and the white point $W_0=[E_0]$ are on the line $x=y$, a contradiction.
\\
In case $\HH$ is of elliptic type w.r.t. $\ell$, we identify in the standard way the affine plane $AG(2,p) = PG(2,p) \setminus \ell$ with the field $F_{p^2}$. Recall that under this identification three
(different) points $a,b,c\in AG(2,p)$ are collinear iff $(a-c)^{p-1}=(b-c)^{p-1}$.   If we denote by $(x;1)$, with $x\in F_{p^2}$, the coordinates of affine points of $PG(2,p)$,  we can assume that the conic $\Gamma$ has equation $x^{p+1}=1$ and we may take
$$
S_+= \{ (x;1) \,\,\, :\,\,\, x^m=1  \} \, .
$$
Remark that every element of $S_+$ is forced to be an element of $\Gamma $ because $m$ divides $p+1$. Now we can assume that $S_+$ contains the points (corresponding to) $E_0=(1;1)$ and four distinct points of type  $E_1=(v;1)$, $E_{-1}=(1/v;1)$, $E_2=(v^2;1)$, $E_{-2}=(1/{v^2};1)$, where $v$ is a primitive $m$-th root of unity.  Again the claim is that we get a line with two black points and a white one. Actually, the points $W_0=[E_0]$,
$B_1 \peq E_{-1}+ E_1 = (1/v+v;2) = (v^p+v;2)$ and $B_2\peq E_{-2}+ E_2 = (1/{v^2}+v^2;2) = (v^{2p}+v^2;2)$ are on the line whose affine part has equation $x^p=x$.
\end{proof}

A $4$-set $\{i,j,k,l\}$ of indices, or $\{ E_i,E_j,E_k,E_l \}$ of points of $\HH$ is said to be {\em special} if $E_i+E_j+E_k+E_l={\bf 0}$. In this case, the white points $E_i,E_j,E_k,E_l$ determine exactly three black ones, namely $B_{ij}=B_{kl}$, $B_{jl}=B_{ik}$ and $B_{il}=B_{jk}$, and of course $\{ E_i,E_j,E_k,E_l \}$ is a generalized hyperfocused arc contained in $\cal H$.

\begin{lemma} \label{lm6}
Let $E_1,E_2,E_3,E_4,E_5,E_6$ be six distinct points of $\cal H$ such that
$$ E_1+E_2 \peq E_3+E_4 \peq E_5+E_6 \peq B \, .$$
Then, if $\{ E_1,E_2,E_3,E_4 \}$ and $\{ E_1,E_2,E_5,E_6 \}$ are special, $\{ E_3,E_4,E_5,E_6 \}$ is not.
\end{lemma}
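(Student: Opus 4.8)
The plan is to reduce the whole statement to a one-line scalar computation by cashing in the normalization $B_{ij}\peq E_i+E_j$ from (\ref{a2}). First I would fix a single representing vector $B$ for the common black point and rewrite the three collinearity hypotheses as honest vector equations. Since $E_1+E_2$, $E_3+E_4$ and $E_5+E_6$ all represent the projective point $B$, there are scalars $\lambda_1,\lambda_2,\lambda_3\in\F^*$ with
$$E_1+E_2=\lambda_1 B,\qquad E_3+E_4=\lambda_2 B,\qquad E_5+E_6=\lambda_3 B.$$
Each $\lambda_s$ is nonzero precisely because the two white points in the corresponding pair are distinct, so the pairwise sum is a nonzero vector.

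Next I would translate the notion of \emph{special} into a linear condition on these three scalars. By definition $\{E_1,E_2,E_3,E_4\}$ is special iff $E_1+E_2+E_3+E_4=\mathbf{0}$, which by the displayed equations is equivalent to $(\lambda_1+\lambda_2)B=\mathbf{0}$, i.e.\ to $\lambda_1+\lambda_2=0$. In exactly the same way, $\{E_1,E_2,E_5,E_6\}$ is special iff $\lambda_1+\lambda_3=0$, and $\{E_3,E_4,E_5,E_6\}$ is special iff $\lambda_2+\lambda_3=0$.

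The conclusion is then immediate. Assuming the first two $4$-sets are special gives $\lambda_2=-\lambda_1$ and $\lambda_3=-\lambda_1$, hence $\lambda_2+\lambda_3=-2\lambda_1$. If the third set were also special we would get $2\lambda_1=0$, and since $p\neq 2$ this forces $\lambda_1=0$, contradicting $\lambda_1\in\F^*$. Therefore $\{E_3,E_4,E_5,E_6\}$ cannot be special. I do not anticipate any genuine obstacle in this argument; the only points requiring care are that the distinctness of the white points is what guarantees $\lambda_1\neq 0$ (so that the contradiction actually bites), and that the standing hypothesis $p\neq 2$ is exactly what lets us pass from $2\lambda_1=0$ to $\lambda_1=0$.
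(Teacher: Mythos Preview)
Your argument is correct and is essentially the paper's own proof, just with the scalars $\lambda_s$ made explicit. The paper bypasses the $\lambda$'s by observing that the two specialness hypotheses give $E_3+E_4=-(E_1+E_2)=E_5+E_6$ as honest vectors, whence $E_3+E_4+E_5+E_6=2(E_3+E_4)\ne\mathbf{0}$ since $p>2$ and $E_3+E_4\ne\mathbf{0}$; this is the same computation as your $\lambda_2+\lambda_3=-2\lambda_1\ne 0$.
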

\begin{proof}
If $E_1+E_2+E_3+E_4=E_1+E_2+E_5+E_6={\bf 0}$ then $E_3+E_4=E_5+E_6$ so $E_3+E_4+E_5+E_6\ne {\bf 0}$ since $p>2$
and $E_3+E_4\ne {\bf 0}$.
\end{proof}

We are now in the right position in order to prove our main result.

\begin{theorem} If $p$ is an odd prime and $\cal H$ is a hyperfocused arc of size $2n$, in $PG(2,p)$ then $n\le 2$.
\end{theorem}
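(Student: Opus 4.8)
The plan is to assume $2n\ge 6$, i.e. $n\ge 3$, and derive a contradiction, working throughout with the normalisation $B_{ij}\peq E_i+E_j$ of~(\ref{a2}). For a black point $B$ with associated $1$-factor $M_B=\{(a_1,a_1'),\dots,(a_n,a_n')\}$ (the perfect matching whose secants pass through $B$) I fix a representative $\widetilde B$ and write $E_{a_s}+E_{a_s'}=c_s\widetilde B$ with $c_s\in\F^*$. The point of this bookkeeping is that the $4$-set $\{a_s,a_s',a_t,a_t'\}$ is \emph{special} precisely when $c_s+c_t=0$, so ``being special'' just says that two of these scalars are opposite. (Summing the relations over a whole factor and using that there are at least two distinct black points also forces $\sum_i E_i={\bf 0}$, which is the ambient reason behind the definition of special sets, though it is not strictly needed below.)

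First I would turn a \emph{non-special} pair into collinear black points. If $c_s+c_t\ne 0$ then
\[
(E_{a_s}+E_{a_t})+(E_{a_s'}+E_{a_t'})=(E_{a_s}+E_{a_s'})+(E_{a_t}+E_{a_t'})=(c_s+c_t)\widetilde B,
\]
so $\widetilde B$ is a linear combination of the representatives of $B_{a_sa_t}$ and $B_{a_s'a_t'}$; these three black points are pairwise distinct ($B$ differs from them since $\{a_s,a_t\}\notin M_B$, and $B_{a_sa_t}\ne B_{a_s'a_t'}$ exactly because $c_s+c_t\ne 0$) and collinear. By Lemma~\ref{lm6}, among any three edges of $M_B$ some pair is non-special, and for $n\ge 3$ such triples exist; hence there is a line carrying at least three black points. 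Provided this line meets $\B$ in fewer than $p$ points (the possibility of a line carrying $p$ or more black points is ruled out by a short separate argument and does not arise when $2n-1<p$), Lemmas~\ref{lm5} and~\ref{ll1} force its number $m$ of black points to satisfy $3\le m\le 4$ and $m\mid n$. Thus $n\ge 3$ already implies $3\mid n$ or $4\mid n$.

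Next I would dispose of the odd case. If $n$ is odd then $4\nmid n$, so the previous step gives $3\mid n$; moreover every line meeting $\B$ in $2\le m<p$ points has $m\mid n$ with $m$ odd, forcing $m=3$. Consequently every line through two black points carries exactly three, so the $\binom{2n-1}{2}$ pairs of black points are partitioned into blocks of three (one per line) and $3\mid\binom{2n-1}{2}=(2n-1)(n-1)$. But $n\equiv 0\pmod 3$ gives $(2n-1)(n-1)\equiv 2\cdot 2\equiv 1\pmod 3$, a contradiction. Hence $n$ must be even, and we are reduced to $n$ even with $3\mid n$ or $4\mid n$; in particular $n\ge 4$.

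The hard part will be the even case, where the value $m=2$ is now permitted by $m\mid n$, so the crude pair-count above no longer closes. Here I would analyse the pencil of lines through a fixed black point $B$: the remaining $2n-2$ black points split according to their line through $B$, the direction being governed by the transversal parts $\bar F_s$ of the half-differences $\tfrac12(E_{a_s}-E_{a_s'})$ (their components modulo $\widetilde B$), so that $B_{a_sa_t},B_{a_s'a_t'}$ lie on the line of direction $\bar F_s+\bar F_t$ and $B_{a_sa_t'},B_{a_s'a_t}$ on that of $\bar F_s-\bar F_t$. The aim is to show, using Lemma~\ref{lm6} to bound how many pairs can be special (so that enough collinear triples survive), together with the cap $m\le 4$ of Lemma~\ref{lm5} and the subgroup/conic structure behind Lemma~\ref{ll1} (whereby $m$ divides $p\pm 1$ and the relevant $2m$ white points lie on a conic), that for $n$ divisible by $2$ and by $3$ or $4$ the directions $\bar F_s\pm\bar F_t$ must coincide often enough to pile at least five black points on one line ---contradicting Lemma~\ref{lm5}--- or else to produce a secant-to-$\B$ line whose count $m$ violates $m\mid n$. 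Controlling these coincidences among the $\binom{n}{2}$ direction-pairs (an additive problem on the vectors $\bar F_s$), and equivalently pinning down that the whole white arc lies on a single conic so that Example~\ref{ex1} and Theorem~\ref{BKM} can be brought to bear, is the step I expect to be the genuine obstacle.
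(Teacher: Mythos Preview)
Your odd-$n$ argument is correct and pleasant: once you know every $2$-secant of $\B$ is actually a $3$-secant, the Steiner-type count $3\mid(2n-1)(n-1)$ clashes with $3\mid n$. This part is genuinely different from the paper and shorter.

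The gap is the even case, which you yourself flag as ``the genuine obstacle''. As written, nothing after that point is a proof: the pencil analysis through $B$ in terms of the directions $\bar F_s\pm\bar F_t$ is suggestive, but you give no mechanism that forces five black points onto a line or produces an $m\nmid n$. With $n$ even the value $m=2$ is allowed, so divisibility alone cannot close the argument, and Lemma~\ref{lm6} only tells you that specials are not too dense---it does not by itself force enough collinearities. So the proposal is incomplete.

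For comparison, the paper's proof does \emph{not} split into parity cases. It fixes one non-special quadruple $\{W_1,W_2,W_3,W_4\}$ with $E_1+E_2\peq E_3+E_4$, places the five black points $B_{12}=B_{34},B_{13},B_{24},B_{14},B_{23}$ in explicit coordinates, and then computes (using the dual-$3$-net conic structure from Lemma~\ref{ll1}) exactly what happens on the two lines through $B_{12}$ that carry $B_{13},B_{24}$ and $B_{14},B_{23}$. The outcome of this coordinate computation is the key structural fact you are missing: one of these two lines is a $3$-secant of $\B$ and the other is a $4$-secant, and moreover the non-special quadruple can be \emph{recovered} from the pair (3-secant, 4-secant) together with $B$ (the fourth point on the $4$-secant is determined by a cross-ratio condition). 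Now fix a $3$-secant through $B$: it partitions the $n$ white lines through $B$ into $n/3$ triples of pairs lying on conics, each triple gives three non-special quadruples, and each such quadruple must produce a \emph{different} $4$-secant through $B$ (by recoverability). That yields at least $n$ distinct $4$-secants through $B$, hence at least $3n$ black points other than $B$, contradicting $|\B|=2n-1$.

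If you want to push your own line, the missing ingredient is precisely an injectivity statement of this kind: a way to turn non-special quadruples at $B$ into \emph{distinct} heavy secants of $\B$ through $B$, so that a count of quadruples forces too many black points. Without it the even case does not close.
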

\begin{proof}
Since $n>2,$ by Lemma \ref{lm6}, we can consider four points $E_i$ with $E_1+E_2\peq E_3+E_4$ but with
$E_1+E_2+E_3+E_4\ne{\bf 0}.$

Then we have $5$ different black points: $B_{12}=B_{34}$,
$B_{13}$, $B_{14}$, $B_{23}$ and $B_{24}$, moreover
$B_{12}=B_{34},B_{13}$, and $B_{24}$ are
 collinear, and also $B_{12}=B_{34},B_{14}$ and $B_{23}$ are collinear,
 and these two lines are different (for otherwise we would have a line with
 more than $4$ black points). We first show that $B_{13}\ne B_{24}$.
 If they were equal, then we would have $B_{13}\peq E_1+E_3\peq E_2+E_4\peq
 E_1+E_2+E_3+E_4\peq B_{12}$. The same argument shows that $B_{14}\ne B_{23}$.
 Next we show that $B_{12}=B_{34}$, $B_{13}$ and $B_{24}$ are collinear.
 This is clear since the vector $E_1+E_2+E_3+E_4$ (representing $B_{12}=B_{34}$)
 is a linear combination of $E_1+E_3$ and $E_2+E_4$.

After a suitable linear transformation we may put
$B_{12}=B_{34}=(0,0,1)$, $B_{13}=(1,0,0)$, $B_{24}=(1,0,1)$,
$B_{14}=(0,1,0)$ and $B_{23}=(0,1,1)$. Remark that since the lines
$x=0$, $y=0$ and $z=0$ contain (at least) two black points the
coordinates of white points are now all nonzero.
If we put $E_1=(a,b,c)$
then we can compute 'everything'. Indeed,
from $B_{12}$ we have
$E_2=(-a,-b,*)$ and  from $B_{14}$ we have $E_4=(-a,*,-c)$. Now by
$B_{24}$ we get  $E_2=(-a,-b,c-2a)$ and $E_4=(-a,b,-c)$. From
$B_{13}$ we find $E_3=(*,-b,-c)$ and from $B_{34}=B_{12}$ we find
$E_3=(a,-b,-c)$. Finally, from $B_{23}$ we have $a=b$, and hence
$E_1=(a,a,c)$, $E_2=(-a,-a,c-2a)$, $E_3=(a,-a,-c)$ and
$E_4=(-a,a,-c)$. Note that our configuration has an
$x\leftrightarrow y$ symmetry. We proceed to show that in fact one
of the two lines $x=0$ or $y=0$  is a $3$-secant, and the other one is a $4$-secant.

Let start by looking at the case that both lines are a $3$-secant.
We then find additional white points $E_5$ and $E_6$ on one conic
and $E_5'$ and $E_6'$ on the other: $(a-2c,a,2a-c)$ and
$(2c-a,-a,c)$ on the conic with respect to the
 line $y=0$ as well as $(a,a-2c,2a-c)$ and
$(-a,2c-a,c)$, on the conic with respect to $x=0$. Adding two white points
produces a black point, and we find black points
$(2a-2c,2a-2c,4a-2c)$ and $(2c-2a,2c-2a,2c)$. Since the only black point
on the line $x=y$ is $(0,0,1)$, then  $a=c$, i.e. $E_1=E_2,$ a contradiction.

Next we assume that the line $x=0$ is a $4$-secant and let $B$ be the fourth black point on it. We label the black points
on this line with $B_{12}=B_{34}=(0,0,1)=\infty$, $B_{14}=(0,1,0)=0$, $B_{23}=(0,1,1)=1$,
and $B=(0,1,b)=b,$ with
$b\ne 0,1,\infty.$
Let $W_5, \dots, W_8$ be the remaining white points on the conic determined by the line
$x=0$ and the point $W_1$, with $W_1,W_3,W_5,W_7$ forming one component, and the remaining
$W_i$ the other. For a suitable ordering of the $W_i$ we then find the following table
for $(W_i,W_j,B_{ij})$.
\[
\begin{matrix}
    & W_1 & W_3    & W_5 & W_7 \\
 W_2 &  \infty  & 1  &  0  &  b  \\
 W_4 &  0   & \infty & b  &  1 \\
 W_6 & 1 & b & \infty  & 0\\
 W_8 & b & 0 & 1 & \infty
\end{matrix}
\]
\noindent From $W_2+W_5\peq 0 =(0,1,0)$ we get $W_5=(a,u,2a-c)$.

\noindent From $W_8+W_3\peq 0= (0,1,0)$ we get $W_8=(-a,v,c)$.

\noindent From $W_5+W_8\peq 1= (0,1,1)$ we get $v=2a-u$.

\noindent From $W_5+W_4\peq b=(0,1,b)$ we get $(1,b)\peq (a+u,2a-2c)$.

\noindent From $W_8+W_1\peq b=(0,1,b)$ we get $(1,b)\peq (3a-u,2c)$.

Since $a\ne 0$ we can compute $u=3a-4c$.

Finally $b=(0,1,b)\peq W_4+W_5=(0,a+u,2a-2c)=(0,4a-4c,2a-2c)$, so $b=1/2$.

Next we consider the case that both lines ($y=0$ and $x=0$) are a $4$-secant: because of
$x,y$ symmetry we then also find points $W_j'$, $j=5,\dots,8$, in particular
we find $W_5'=(3a-4c,a,2a-c)$ and a hence a black point $B_{55'}\peq (4a-4c,4a-4c,4a-2c)$ on the
line $x=y$, but the only black point on this line is $(0,0,1)$, hence $a=c$ and
the whole thing collapses.

So we find that one line is a $4$-secant, and the other a $3$-secant.

So summarizing: if $E_1+E_2\peq E_3+E_4$ but their sum
is nonzero, then the points $B_{12}=B_{34},B_{14},B_{23}$ are collinear and
so are the points $B_{12}=B_{34}, B_{13}, B_{24}$, one of these lines
is a three-secant, and the other a four secant, with fourth point $B$, determining $1/2$, that is
if $B_{12}=B_{34}=\infty$, $B_{14}=0$, $B_{23}=1$, is the four-secant, then $B=1/2$. Note that the
role of $B_{12}=B_{34}$ is special, but $B_{23}$ and $B_{14}$ can be interchanged,
indeed, the (Moebius)-transformation fixing $\infty$ and interchanging $1$ and $0$
is $x\mapsto 1-x$ and this fixes $1/2$, as it should.

We have seen that a non-special
fourtuple $\{W_i,W_j,W_k,W_l\}$ with $B\peq E_i+E_j\peq E_k+E_l$ produces a 3-secant $\ell$
and a 4-secant $m$
intersecting in $B$, but more important:
we can find back the pairs $(W_i,W_j)$ and $(W_k,W_l)$ from the triple $(\ell,m,B)$.
The four black points on the
4-secant $m$ fall into three groups: the intersection point $B$, a pair $C_1,C_2$, and the fourth point $D$,
the midpoint of the other two when we put the intersection point at infinity, or in other words
the cross ratio $(B,C_1;C_2,D)=(B,C_2;C_1,D)=1/2$. Fixing coordinates
as we did we found that $W_1$ and $W_2$ are on
the line $x=y$ and $W_3$ and $W_4$ are on the line $x=-y$, and these lines
of course contain no further white points.

Now start with a fixed $3$-secant on a fixed black point $B$. The $3$-secant
determines a partition of the $n$ lines connecting $B$ with the white points in
$n/3$ triples. Every triple determines $3$ pairs of pairs $(W_i,W_j)$ and $(W_k,W_l)$
and the fourtuple $(W_i,W_j,W_k,W_l)$ is not special, because the sum of the
six white point $W_i,W_j,W_k,W_l,W_m,W_n$ on the conic determined by the $3$-secant and
any of the $W$'s add up to zero (that is, the corresponding $E$'s do).

So this pair of pairs determines a configuration where one of the lines is the
fixed $3$-secant, and the other a $4$-secant. Now, every $4$-secant
(intersecting our fixed $3$-secant in the fixed black point $B$) gives (at most) $1$ configuration, as we saw above, so we find at least $(n/3)\times 3=n$
different $4$-secants through $B$, containing together (apart from $B$) at
least $3n$ black points, contradiction.

So all four tuples are special, but this means $2n\le 4$.
\end{proof}
\section{Generalizing further}
Generalized hyperfocused arcs were introduced as  a generalization of hyperfocused arcs, but this is not how we hit upon them. In fact we see them as a special case of a configuration whose study is motivated
by the (strong) cylinder conjecture \cite{B1}. This conjecture states that a set $C$ of $q^2$ points in $AG(3,q),$ the affine $3$--space over $F_q,$
that intersects every plane in $0$ mod $q$ points must be a cylinder, the union of $q$ parallel lines.
The ordinary conjecture states this for $q$ is prime, the strong version for prime powers.

In an attempt to prove this conjecture we were led to the study of configurations in $PG(2,p)$
consisting of a set $\mathcal W$ of $k\le (p+1)/2$ \, 'white' points, and a {\em multiset}
$\B$ of $k-1$ 'black' points with the property that every line containing $m>0$ white points contains
exactly $m-1$ black points (counted with multiplicity). The connection is as follows.
\\ [5pt]
Embed $AG(3,q)$ in $PG(3,q)$ and let $P$ be a point of $AG(3,q)$ not in $C$. Project $C$ on to a plane $PG(2,q)$ not containing $P$.
We get a multiset of $q^2$ points in $PG(2,q)$ with a multiple of $q$ points on every line. \newline
Let the \it weight \rm of a line $\ell$ in $PG(2,q)$ be $k-1 ,$ if the plane $\langle P,\ell\rangle$ has
$kq$ points.
Consider a line of weight $-1$.
Clearly all points of it have weight 0, that is, they don't occur in the multiset, or equivalently, they occur in the multiset with multiplicity zero.

If we add up the weights
of the lines through a zero-point, we get $-1$. So points on the intersection of
two ($-1$)-lines must be on a weight $>0$ line as well. More precisely, if
$Q$ is a point on $k$ ($-1$)-lines, then the weights of the positive lines through $Q$
add up to $k-1$.

Now, dualizing,
we get two disjoint point-sets: a set of \it white points, \rm corresponding to the duals of the ($-1$)-lines, and a multi-set of \it black points \rm corresponding to the duals of the
positive weight lines, the multiplicity of a point being the weight of its corresponding line. Every line
that contains more than one white point contains the appropriate number of black points.
Every line having exactly one white point has no black points. The total number of
black points is one less than the number of white points.
In the example of the cylinder all black
and white points are collinear; so our problem is to find (and exclude) the other configurations. Remark that we may choose $P$ in such a way that the number of lines with weight
different from 0 is at most $q$ (there are at most $q^2-q$  planes that intersect $C$
in a number of points different from $q$, choose $P$ such that it is on relatively few of them).
\\ [5 pt]
\indent In case the set $\mathcal W$ is an arc, the multiset
$\B$ is an ordinary set and we are looking at a generalized hyperfocused arc. Our main result therefore is that this
situation essentially does not occur.  To classify these configurations in general seems to be hopeless
but the prime case could be doable (and might settle the cylinder conjecture!).
Here we give the relevant (that is, with at most $(p+1)/2$ white points) examples we know in planes of prime order. In general many more examples exist, also in planes of prime order.

\begin{eg} {\bf\small (White and black points are collinear)}
All white points are collinear, black points are arbitrary other
points on this line, the right number of them. This is the only example
coming from cylinders.
\label{coll1}\end{eg}

\begin{eg} {\bf\small (White points on two lines)}
In  $AG(2,p)$ consider as white points $(a,0)$ and $(0,b)$ where
$a$ and $b$ are in a (multiplicative) subgroup of $GF(p)^*$ of
order $n$ say (or in a coset). Take black points at infinity in the points
$(a:-b:0)=(1:-b/a:0)$, and take the origin with multiplicity $n-1$.

This example is characterized by the property that the white points are
contained in the union of two lines, for example by using the earlier
mentioned result on 3-nets \cite{BKM} (but in its general form).

\label{sbgr1}\end{eg}

\begin{eg} {\bf\small (White points form an arc)}
The white points form a $4$-arc, and there
are $3$ black points, the diagonal points. This is the example classified in this note.
\label{2narc1}\end{eg}
 \section*{ Acknowledgement } This research was supported by the  \it Mathematics and Physics  Department \rm  of the Seconda Universit\`a
degli Studi di Napoli,
by \it GNSAGA \rm of the Italian Istituto Nazionale di Alta Matematica and
by Italian national research project \it Strutture geometriche,
combinatoria e loro applicazioni \rm (COFIN 2008).



\vskip 2cm
Addresses of the authors:\\

Aart Blokhuis\\
Department of Mathematics and Computer Science\\
Eindhoven University of Technology\\
P. O. Box 513\\
5600 MB Eindhoven\\
The Netherlands\\
(aartb@win.tue.nl)\\

Giuseppe Marino, Francesco Mazzocca\\
Dipartimento di Matematica e Fisica\\
Seconda Università degli Studi di Napoli\\
Via Vivaldi 43\\
81100 Caserta\\
Italy\\
(giuseppe.marino@unina2.it, francesco.mazzocca@unina2.it)\\

\end{document}